\newtheorem{theorem}{Theorem}[section]
\newtheorem{lemma}[theorem]{Lemma}
\theoremstyle{definition}
\newtheorem{definition}[theorem]{Definition}
\newtheorem{numbered_definition}[theorem]{Definition}
\newtheorem{remark}[theorem]{Remark}
\newcommand{\df}[1]{{{\bf #1}}}
\DeclareMathOperator{\im}{im}
\DeclareMathOperator{\st}{st}
\DeclareMathOperator{\mesh}{mesh}
\DeclareMathOperator{\id}{id}
\newcommand{\ptsize}{{1pt}}
\begin{document}
\baselineskip=17pt

\bibliographystyle{abbrv}
\title[Combinatorics of Markov compacta]{Detecting topological properties of Markov compacta with combinatorial properties of their diagrams}

\author{G.~C.~Bell}
\address[G.~C.~Bell]{Department of Mathematics and Statistics, University of North Carolina at Greensboro, Greensboro, NC 27412, USA}
\email{gcbell@uncg.edu}

\author[A.~Nagórko]{A.~Nagórko}
\address[A.~Nagórko]{Faculty of Mathematics, Informatics, and Mechanics, University of Warsaw, Banacha 2, 02-097 Warszawa, Poland}
\email{amn@mimuw.edu.pl}

\thanks{The first author was supported by a UNCG Faculty First Grant. 
This research was supported by the NCN (Narodowe Centrum Nauki) grant no. 2011/01/D/ST1/04144.}

\date{}

\subjclass[2010]{Primary 54F15; Secondary 54F50}

\keywords{Markov compacta, local $k$-connectedness}

\begin{abstract}
  We develop a formalism that allows us to describe Markov compacta with finite sets of diagrams that are building blocks of the entire sequence. This encodes complex, continuous spaces with discrete collections of combinatorial objects. We show that topological properties of the limit (such as $k$-connectedness, local $k$-connectedness or the disjoint arcs property) may be detected by looking at combinatorial properties of the diagrams.

  Markov compacta were introduced by M. Gromov and were motivated by some examples in geometric group theory.
  In particular, boundaries at infinity of hyperbolic groups belong to this
class.
\end{abstract}
\maketitle

\section{Introduction}

Consider the following inverse sequence.

\tikzset{outer/.style={->, >=latex, shorten >= 5pt, shorten <= 5pt, dash pattern=on 1pt off 1pt on 1pt off 1pt on 1pt off 1pt on 1pt off 1pt on 100pt}}
\vspace{2mm}
\begin{center}
\begin{tikzpicture}[scale=0.8]
  \coordinate (A) at (0,0); \coordinate (B) at (3, 1.5);
  \coordinate (C) at (3,-1.5); \coordinate (D) at (6, 2);
  \coordinate (E) at (6,1); \coordinate (F) at (6, -1);
  \coordinate (G) at (6,-2); \coordinate (H) at (9, 2.4);
  \coordinate (I) at (9, 1.6); \coordinate (J) at (9, 1.4);
  \coordinate (K) at (9, 0.6); \coordinate (L) at (9, -0.6);
  \coordinate (M) at (9, -1.4); \coordinate (N) at (9, -1.6);
  \coordinate (O) at (9, -2.4);
  
  \filldraw (A) circle (\ptsize) (B) circle (\ptsize) (C) circle (\ptsize)
    (D) circle (\ptsize) (E) circle (\ptsize) (F) circle (\ptsize)
    (G) circle (\ptsize);

  \draw[->, >=latex, shorten >= 5pt, shorten <= 5pt] (B) -- (A);
  \draw[->, >=latex, shorten >= 5pt, shorten <= 5pt] (C) -- (A);
  \draw[->, >=latex, shorten >= 5pt, shorten <= 5pt] (D) -- (B);
  \draw[->, >=latex, shorten >= 5pt, shorten <= 5pt] (E) -- (B);
  \draw[->, >=latex, shorten >= 5pt, shorten <= 5pt] (F) -- (C);
  \draw[->, >=latex, shorten >= 5pt, shorten <= 5pt] (G) -- (C);

  \draw[outer] (H)--(D); \draw[outer] (I)--(D);
  \draw[outer] (J)--(E); \draw[outer] (K)--(E);
  \draw[outer] (L)--(F); \draw[outer] (M)--(F);
  \draw[outer] (N)--(G); \draw[outer] (O)--(G);
  
  \node at (0,-3) { $K_1$ };
  \draw[->, shorten >= 10pt, shorten <= 10pt] (3,-3) -- node[above] {$p_1$} (0,-3);
  \node at (3,-3) { $K_2$ };
  \draw[->, shorten >= 10pt, shorten <= 10pt] (6,-3) -- node[above] {$p_2$} (3,-3);
  \node at (6,-3) { $K_3$ };
  \draw[->, shorten >= 10pt, shorten <= 10pt] (9,-3) -- node[above] {$p_3$} (6,-3);
  \node at (9,-3) { $\cdots$ };
\end{tikzpicture}
\end{center}
\vspace{1mm}

The $n$-th space $K_n$ of this sequence is a discrete space with $2^{n-1}$ elements and each bonding map $p_n$ has $2$-element fibers.
The inverse limit of this sequence is homeomorphic to the Cantor set.
Observe that the whole sequence can be recreated using a single diagram

\begin{center}
\begin{tikzpicture}[scale=0.8]
  \filldraw 	(0,0) circle (\ptsize)
  		(3, 1) circle (\ptsize)
		(3, -1) circle (\ptsize);
  \node (base) at (0,0) { };
  \draw[->, >=latex, shorten >= 1pt, shorten <= 5pt] (3, 1) -- (base);
  \draw[->, >=latex, shorten >= 1pt, shorten <= 5pt] (3, -1) -- (base);
\end{tikzpicture}
\end{center}
which can be considered a ``building block" of the sequence. Similarily, the classical inverse sequence used to define a solenoid

\vspace{1mm}
\begin{center}
\begin{tikzpicture}[scale=0.7]
\coordinate (A) at (0,0);
\coordinate (B) at (2,2);
\coordinate (C) at (2,-2);
\filldraw (A) circle(\ptsize);
\filldraw (B) circle(\ptsize);
\filldraw (C) circle(\ptsize);
\draw (B)--(A)--(C);
\draw [densely dotted] (B)--(C);
\coordinate (D) at (4,0);
\coordinate (E) at (6,2);
\coordinate (F) at (6,-2);
\coordinate (G) at (4.5,0);
\coordinate (H) at (6.5,2);
\coordinate (I) at (6.5,-2);
\draw [densely dotted] (H)--(F);
\draw [white, line width=3pt] (I)--(E);
\draw (E)--(I);
\draw [white, line width=3pt] (H)--(G);
\draw [white, line width=3pt] (I)--(G);

\filldraw (D) circle(\ptsize);
\filldraw (E) circle(\ptsize);
\filldraw (F) circle(\ptsize);
\filldraw (G) circle(\ptsize);
\filldraw (H) circle(\ptsize);
\filldraw (I) circle(\ptsize);

\draw (I)--(G)--(H);
\draw (E)--(D)--(F);

\coordinate (D') at (8,0);
\coordinate (E') at (10,2);
\coordinate (F') at (10,-2);
\coordinate (G') at (8.5,0);
\coordinate (H') at (10.5,2);
\coordinate (I') at (10.5,-2);

\coordinate (D'') at (8.15,0);
\coordinate (E'') at (10.15,2);
\coordinate (F'') at (10.15,-2);
\coordinate (G'') at (8.65,0);
\coordinate (H'') at (10.65,2);
\coordinate (I'') at (10.65,-2);

\draw [densely dotted] (F')--(H''); 
\draw [white, line width=2pt] (F'')--(H');
\draw (F'')--(H');
\draw [white, line width=2pt] (I')--(E');
\draw [white, line width=2pt] (I'')--(E'');
\draw (I')--(E');
\draw (I'')--(E'');
\draw [white, line width=2pt] (I')--(G');
\draw [white, line width=2pt] (I'')--(G'');
\draw (I')--(G');
\draw (I'')--(G'');
\draw [white, line width=2pt] (H')--(G');
\draw [white, line width=2pt] (H'')--(G'');
\draw (H')--(G');
\draw (H'')--(G'');
\draw [white, line width=2pt] (F'')--(D'');
\draw [white, line width=2pt] (D'')--(E'');
\draw (F')--(D')--(E');
\draw (F'')--(D'')--(E'');

\filldraw (D') circle(\ptsize);
\filldraw (E') circle(\ptsize);
\filldraw (F') circle(\ptsize);
\filldraw (G') circle(\ptsize);
\filldraw (H') circle(\ptsize);
\filldraw (I') circle(\ptsize);

\filldraw (D'') circle(\ptsize);
\filldraw (E'') circle(\ptsize);
\filldraw (F'') circle(\ptsize);
\filldraw (G'') circle(\ptsize);
\filldraw (H'') circle(\ptsize);
\filldraw (I'') circle(\ptsize);

\draw [->,shorten >= 10pt, shorten <= 10pt] (D)--(2,0);
\draw [->,shorten >= 10pt, shorten <= 10pt] (D')--(6.25,0);
\draw [->,shorten >= 10pt, shorten <= 10pt] (12.5,0)--(10.5,0);
\node at (12.7,0) {$\cdots$};

\end{tikzpicture}
\end{center}
can be encoded using the following two ``building blocks'' and colors
 (represented by solid and dotted lines) that describe the ``gluing'' process.

\vspace{3mm}
\begin{center}
\begin{tikzpicture}
\draw [densely dotted] (3,0)--(3,1.5);
\draw [densely dotted] (4,0)--(4.5,1.5);
\draw [->,shorten >= 3pt, shorten <= 3pt] (4,0.75)--(3,0.75);
\draw [->,shorten >= 3pt, shorten <= 3pt] (1,0.75)--(0,0.75);
\draw [line width=3pt,white] (4,1.5)--(4.5,0);
\draw (4,1.5)--(4.5,0);

\draw (0,0)--(0,1.5);
\draw (1,0)--(1,1.5);
\draw (1.5,0)--(1.5,1.5);
\foreach \t in {0,1,1.5, 3, 4, 4.5}{
\filldraw (\t,0) circle (\ptsize)
             (\t,1.5) circle (\ptsize);
}
\end{tikzpicture}
\end{center}

  \vspace{2mm}
  Both the Cantor set and the solenoid are examples of Markov compacta.
  The class of Markov compacta was introduced by M.~Gromov and was motivated by some examples in geometric group theory.
  In particular, boundaries at infinity of hyperbolic groups belong to this
class~\cite{pawlik2015}.

  In the present paper we develop a formalism that allows us to encode Markov compacta using finite sets of diagrams and gluing rules similar to the diagrams depicted above.
  This allows us to encode complex, continuous topological spaces using a discrete collection of finite complexes.
  We show that some topological properties of the limit space (such as connectedness, local connectedness or the disjoint arcs property) can be detected by the combinatorial properties of the diagrams.
  In particular, we give examples of Markov spaces that are homeomorphic to Menger cubes and N\"obeling spaces.
  Detecting topological properties that characterize these spaces is important for applications.
  For example, by theorems of Hensel-Przytycki~\cite{henselprzytycki2011} and Gabai~\cite{gabai2011} the boundary at infinity of a curve complex (which is a hyperbolic graph) of a sphere punctured $n+5$ times is homeomorphic to the $n$-dimensional N\"obeling space. Moreover, Dymara and Osajda~\cite{DymaraOsajda} have shown that the boundary of a right-angled hyperbolic building is a universal Menger space.
  The characterization of boundaries of curve complexes of higher genus surfaces with punctures is open.

  We focus on the one-dimensional case, where all polyhedra in the sequence are graphs. This case covers a wide class of interesting examples.
    

  This paper opens a new avenue of research: to determine 
  which topological properties of Markov compacta can be detected by combinatorial properties of its diagram. 
  
\section{Markov Spaces}

\begin{definition}
  Let $K$ be a simplicial complex.
  Let $\tau(K)$ be the triangulation of~$K$.
    We let $\beta K$ denote the barycentric subdivision of $K$ (i.e., the same metric space but with finer triangulation $\tau(\beta K)$).
  A \df{coloring} of $K$ is a map from $\tau(K)$ to $\mathbb{N}$.
  A \df{colored complex} is a simplicial complex with fixed coloring.
  We let $c(K) \colon K \to \mathbb{N}$ denote the coloring of a colored complex $K$.
  A \df{colored embedding} is a simplicial embedding $i \colon K \to L$ of colored complexes $K$ and $L$ such that  $c(K) = c(L) \circ i$.
\end{definition}

\begin{definition}
  Let $K$ and $L$ be simplicial complexes and let $p \colon K \to L$.
  We say that $p$ is \df{quasi-simplicial} if it is a simplicial map into $\beta L$.
\end{definition}

\begin{definition}
  A \df{production $P$} is a simplicial or quasi-simplicial map 
  $P \colon K \to L$ from a colored complex $P$ into a colored complex $K$.
  We call $K$ the \df{top} complex of the production and denote it 
  by $t(P)$.
  We call $L$ the \df{bottom} complex of the production and denote it
  by $b(P)$.
\end{definition}

\begin{definition}
  Let $S$ and $T$ be productions.
  A \df{gluing $G$} is a pair of maps $(G_t \colon t(S) \to t(T), 
    G_b \colon b(S) \to b(T))$ that are colored embeddings 
    such that the following diagram is commutative
  \[
    \xymatrix@M=8pt@C=35pt{
    t(S) \ar[r]^{G_t} \ar[d]^{S} & t(T) \ar[d]^{T} \\
    b(S) \ar[r]^{G_b}  & b(T) \\
    }
  \]  
\end{definition}

\begin{definition}
  A \df{Markov diagram $\mathcal{D}$} is a triple $\mathcal{D} = (S, \mathcal{P}, \mathcal{G})$, where
  $S$ is a simplicial complex that we call \df{the starting complex of 
  $\mathcal{D}$}, $\mathcal{P}$ is a set of productions that we call \df{the production set of $\mathcal{D}$} and $\mathcal{G}$ is a set of gluings that we call \df{the set of gluing rules of $\mathcal{D}$}.
\end{definition}

\begin{definition}
  Let $\mathcal{D} = (S, \mathcal{P}, \mathcal{G})$ be a Markov diagram.
  Let $\Gamma = (V, E)$ be a directed graph.
  Let $P \colon V \to \mathcal{P}$ be a labelling of vertices of $\Gamma$ by productions of $\mathcal{D}$.
  Let $G \colon E \to \mathcal{G}$ be a labelling of edges of $\Gamma$ by gluings of $\mathcal{D}$. 
  If for each directed edge $e = (u, v)$ of $\Gamma$ the following diagram is commutative
  \[
    \xymatrix@M=8pt@C=35pt{
    t(P(u)) \ar[r]^{G(e)_t} \ar[d]^{P(u)} & t(P(v)) \ar[d]^{P(v)} \\
    b(P(u)) \ar[r]^{G(e)_b}  & b(P(v)) \\
    }
  \]
  then we say that $\Gamma$ is an \df{assembly graph} for $\mathcal{D}$.
\end{definition}

\begin{definition}
  Let $K, L$ be colored complexes.
  Let $f \colon K \to L$ be a simplicial or a quasi-simplicial map.
  Let $\mathcal{D} = (S, \mathcal{P}, \mathcal{G})$ be a Markov diagram.
  Let $\Gamma = (V, E)$ be an assembly graph for $\mathcal{D}$ with labelling $P \colon V \to \mathcal{P}$ of vertices and labelling $G \colon E \to \mathcal{G}$ of edges.

  Let $\mathcal{T} = \{ f_v \colon t(P(v)) \to K \}_{v \in V}$
  and $\mathcal{B} = \{ g_v \colon b(P(v)) \to L \}_{v \in V}$ be collections of colored embeddings.
  Assume that for each edge $e = (u, v) \in V$ the following diagram is commutative
  \[
    \xymatrix@M=8pt@C=35pt{
    K \ar[d]^f & \ar[l]^{f_u} t(P(u)) \ar[r]^{G(e)_t} \ar[d]^{P(u)} & t(P(v)) \ar[d]^{P(v)} \ar[r]^{f_v} & K \ar[d]^f \\
    L & \ar[l]^{g_u} b(P(u)) \ar[r]^{G(e)_b}  & b(P(v)) \ar[r]^{g_v} & L \\
    }
  \]
  Assume that $\{ \im f_v \}_{v \in V}$ is a cover of $K$ that is closed under intersection and $\{ \im g_v \}_{v \in V}$ is a cover of $L$ that is closed under intersection.
  
  Then we say that $(\mathcal{T}, \mathcal{B})$ is a \df{chart} for $f$.
  We call $\mathcal{T}$ the \df{top chart} and $\mathcal{B}$ the \df{bottom chart} for $f$.
  The structure $(\Gamma, \mathcal{T}, \mathcal{B})$ satisfying the above conditions is called a \df{decomposition of $f$} with respect to $\mathcal{D}$.
\end{definition}

\begin{definition}
  Let $\mathcal{D}$ be a Markov diagram. A \df{Markov sequence} is an inverse sequence
  \[
   K_1\xleftarrow{p_1}K_2\xleftarrow{p_2}K_3\xleftarrow{p_3}\cdots
  \]
  along with a decompositions of all bonding maps $p_i$ over $\mathcal{D}$.
  The limit of the sequence
  \[
   \lim_{\longleftarrow} \left( K_1\xleftarrow{p_1}K_2\xleftarrow{p_2}K_3\xleftarrow{p_3}\cdots \right)
  \]
  is called a \df{Markov space}. If complexes $K_i$ are $1$-dimensional, then we call the limit a \df{Markov curve}.
\end{definition}


\subsection{The $1$--$8$ sequence}\label{ss:18}

All graphs in this section are mono-colored. The $1$--$8$ sequence is named after a single edge production $P_8$ it contains.

\vspace{1mm}
\begin{center}
\begin{tikzpicture}
\node at (-0.5,1) {$P_8$:};
\coordinate (X) at (2,0); \coordinate (Y) at (0,0);
\draw [black] (X)--(Y); \fill (X) circle(\ptsize); \fill (Y) circle(\ptsize);
\coordinate (A) at (2,2); \coordinate (B) at (2,1);
\coordinate (F) at (1,2); \coordinate (C) at (1,1);
\coordinate (E) at (0,2); \coordinate (D) at (0,1);
\fill (A) circle(\ptsize); \fill (B) circle(\ptsize);
\fill (C) circle(\ptsize); \fill (D) circle(\ptsize);
\fill (E) circle(\ptsize); \fill (F) circle(\ptsize);
\draw [black] (A)--(B)--(C)--(D)--(E)--(F)--(A);
\draw [black] (F)--(C);
\draw [->,shorten >= 5pt, shorten <= 5pt] (C)--(1,0);
\end{tikzpicture}
\end{center}

The production map $P_8$ is the vertical quasi-simplicial 
  projection from the top $8$-shaped graph $t(P_8)$ onto the single-edge 
  graph $b(P_8)$ on the bottom.
The production set is completed by a single vertex production $P_1$.

\begin{center}
\begin{tikzpicture}
\node at (-0.5,0) {$P_1$:};
\coordinate (Y) at (0,0);
\fill (Y) circle(\ptsize);
\coordinate (E) at (2,0); \coordinate (D) at (3,0);
\fill (D) circle(\ptsize);
\fill (E) circle(\ptsize);
\draw [black] (D)--(E);
\draw [->,shorten >= 10pt, shorten <= 10pt] (E)--(Y);
\end{tikzpicture}
\end{center}

The production map $P_1$ projects horizontally the right edge graph $t(P_1)$ onto a single vertex graph $b(P_1)$ on the left.
We let $\mathcal{P} = \{ P_1, P_8 \}$ be the production set of the $1$--$8$ sequence.
There are two gluing rules in the $1$--$8$ sequence, the left gluing $G_l$ and the right gluing $G_r$.

\vspace{1mm}
\begin{center}
\begin{tikzpicture}
\coordinate (X) at (2,0); \coordinate (Y) at (0,0);
\draw [black] (X)--(Y); \fill (X) circle(\ptsize); \fill (Y) circle(\ptsize);
\coordinate (A) at (2,2); \coordinate (B) at (2,1);
\coordinate (F) at (1,2); \coordinate (C) at (1,1);
\coordinate (E) at (0,2); \coordinate (D) at (0,1);
\fill (A) circle(\ptsize); \fill (B) circle(\ptsize);
\fill (C) circle(\ptsize); \fill (D) circle(\ptsize);
\fill (E) circle(\ptsize); \fill (F) circle(\ptsize);
\draw [black] (A)--(B)--(C)--(D)--(E)--(F)--(A);
\draw [black] (F)--(C);
\draw [->,shorten >= 5pt, shorten <= 5pt] (C)--(1,0);
\coordinate (G) at (-2, 0);
\coordinate (H) at (-2, 1);
\coordinate (I) at (-2, 2);
\draw [black] (I)--(H);
\fill (G) circle(\ptsize); \fill (H) circle(\ptsize); 
\fill (I) circle(\ptsize);
\draw [->, shorten >= 5pt, shorten <= 5pt] (H) -- node [above] { $G_l$ }(D) ;

\coordinate (G') at (4, 0);
\coordinate (H') at (4, 1);
\coordinate (I') at (4, 2);
\draw [black] (I')--(H');
\fill (G') circle(\ptsize); \fill (H') circle(\ptsize); 
\fill (I') circle(\ptsize);
\draw [->, shorten >= 5pt, shorten <= 5pt] (H') -- node [above] { $G_r$ }(B) ;

\end{tikzpicture}
\end{center}

We let $\mathcal{G} = \{ G_l, G_r \}$ be the set of gluing rules of the $1$--$8$ sequence.
The starting graph is a single edge graph 

\begin{center}
\begin{tikzpicture}
\node at (-0.5,0) {$S$:};
\filldraw (0,0) circle (1pt); \filldraw (1,0) circle (1pt); \draw [black] (0,0) -- (1,0); 
\end{tikzpicture}
\end{center}

The triple $\mathcal{D} = (S, \mathcal{P}, \mathcal{G})$ is a Markov diagram, which we call the $1$--$8$ diagram. Below are first six elements of a Markov sequence that corresponds to this diagram.

\newcommand{\w}{22mm}
\begin{center}
\begin{longtable}{m{\w}m{2mm}m{\w}m{2mm}m{\w}m{2mm}m{\w}m{2mm}m{2cm}}
	\includegraphics[width=\w]{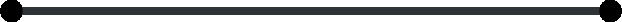} &
	$\leftarrow$ &
	\includegraphics[width=\w]{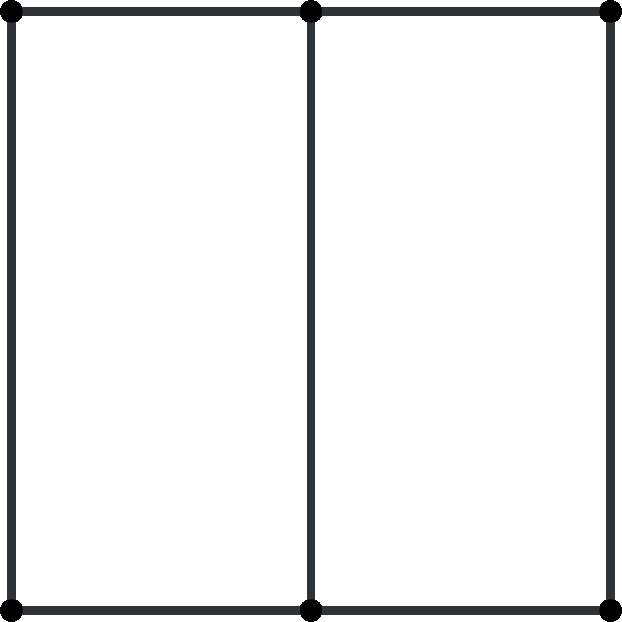} &
	$\leftarrow$ &
	\includegraphics[width=\w]{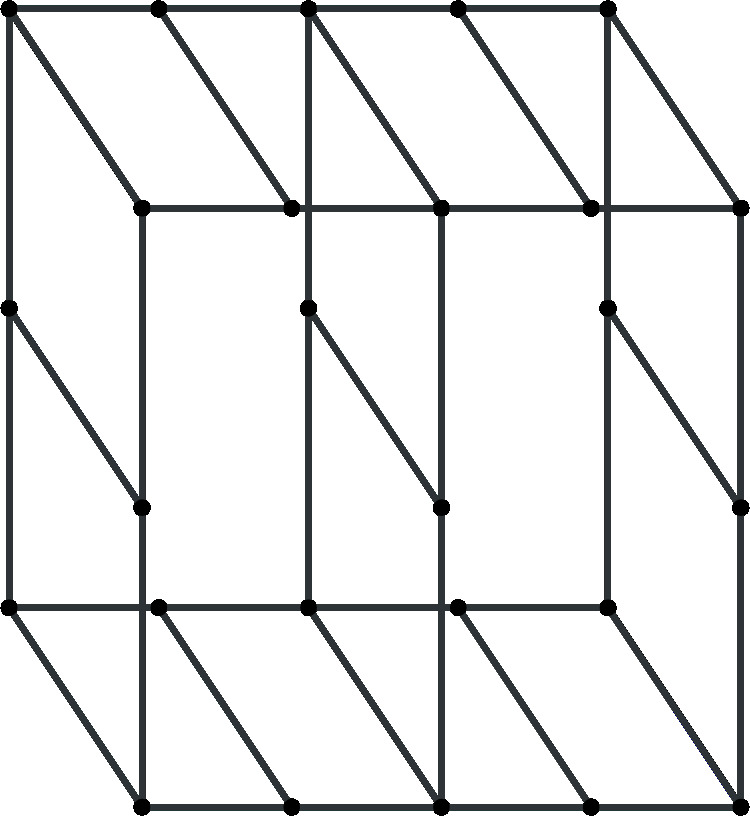} &
	$\leftarrow$ &
    &
    &
    \\
	& & & & & & & &
    \\
    &
	$\leftarrow$ &
	\includegraphics[width=\w]{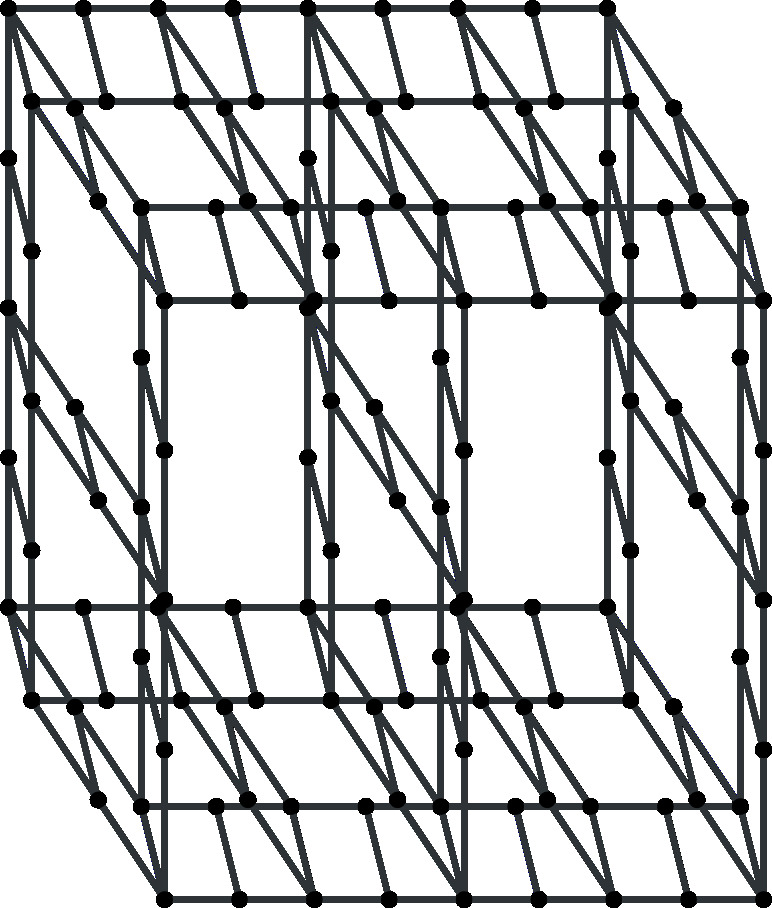} &
	$\leftarrow$ &
	\includegraphics[width=\w]{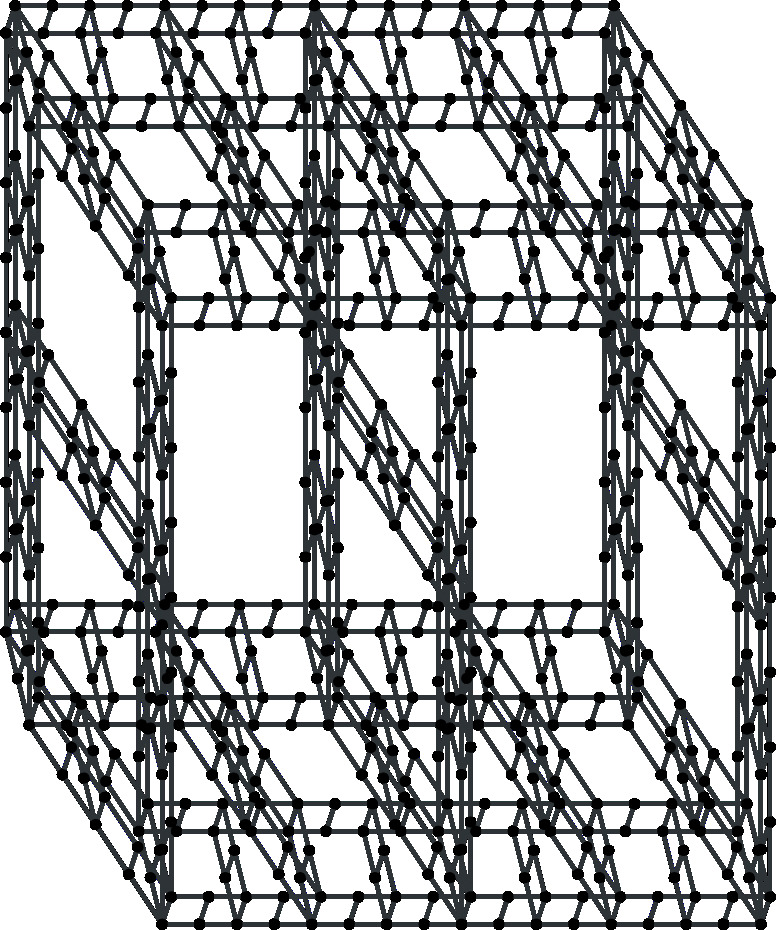} &
	$\leftarrow$ &
	\includegraphics[width=\w]{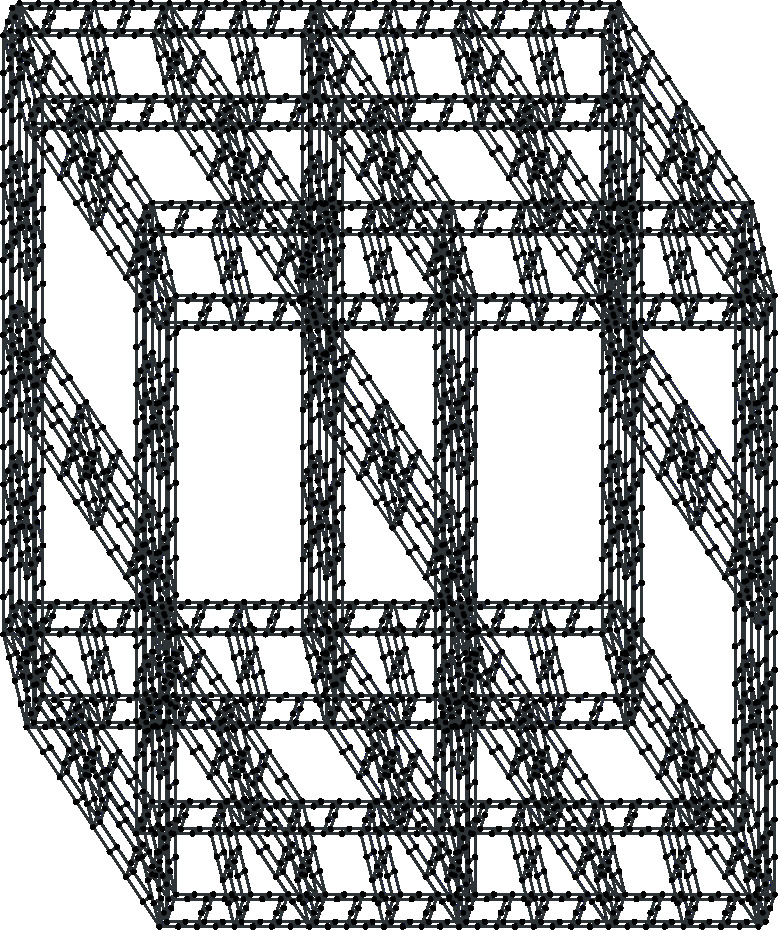} &
	$\leftarrow$
	& $\cdots$
\end{longtable}
\end{center}

We will show how the first bonding map $p_1$ decomposes over $\mathcal{D}$.
We take an assembly graph $\Gamma_1$:

\begin{center}
\begin{tikzpicture}
\coordinate (A) at (0,0); \coordinate (B) at (2,0);
\coordinate (C) at (4,0);
\fill (A) circle(\ptsize) node [above] { $P_1$ } node [below] { \footnotesize $\{v_1\}$ }; 
\fill (B) circle(\ptsize) node [above] { $P_8$ } node [below] { \footnotesize $\overline{v_1v_2}$ };
\fill (C) circle(\ptsize) node [above] { $P_1$ } node [below] { \footnotesize $\{v_2\}$ };
\draw [->,> = latex, shorten >= 1pt] (A) -- node [above] { $G_l$ }(B);
\draw [->,> = latex, shorten >= 1pt] (C) -- node [above] { $G_r$ }(B);
\end{tikzpicture}
\end{center}

The vertices of $\Gamma_1$ are labeled with productions $P_1$ and $P_8$, edges are labeled with gluings $G_l$ and $G_r$. 
This yields a commutative diagram of upper and lower charts, which is a decomposition of $p_1$ over $\Gamma_1$.


The other bonding maps can be decomposed in a similar way.
We will show in the later sections that the limit of the $1$--$8$ sequence is homeomorphic to the universal Menger curve $M^3_1$.







\begin{definition}
  We say that a Markov sequence is an \df{elementary Markov sequence} if the bottom graphs of its productions are either single vertex graphs or single edge graphs.
\end{definition}


For a simplicial complex, the underlying polyhedron has two topologies, the Whitehead (weak) topology and the metric topology.
The weak topology is metrizable if and only if the complex is locally finite and
it coincides with the metric topology in this case.
Since we work in the metric category with complexes that are not locally finite, {\bf we always assume the metric topology on simplicial complexes}.

\begin{definition}
  Let $K$ be simplicial complex.
  We let $\tau(K)$ denote the \df{triangulation of~$K$} (the set of simplices of $K$).
  We let $V(K)$ denote the \df{vertex set of $K$}.
  We let $\beta K$ denote the barycentric subdivision of $K$ (i.e., the same metric space but with finer triangulation $\tau(\beta K)$).
\end{definition}


\begin{numbered_definition}\label{def:geodesic polyhedron}
  Let $K$ be a simplicial complex. Let $\kappa \in (0, \infty)$.
  We define \df{a geodesic metric of scale $\kappa$ on $K$} in the following way.
  On each $\delta \in \tau(K)$ we take a Euclidean metric with edge length $\kappa$. We extend this on $K$ to the unique geodesic metric.
\end{numbered_definition}

\begin{lemma}\label{lem:metrics}
  The metric defined in Definition~\ref{def:geodesic polyhedron} induces the metric topology.
\end{lemma}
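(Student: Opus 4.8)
The plan is to prove that the geodesic metric $d_{\mathrm{geo}}$ of scale $\kappa$ and the metric $d_{\mathrm{std}}$ that defines the metric topology induce the same topology on $|K|$; I take $d_{\mathrm{std}}(x,y)=\bigl(\sum_{v\in V(K)}(x_v-y_v)^2\bigr)^{1/2}$, the $\ell^2$ distance between barycentric coordinates. It suffices to show that the identity is a homeomorphism between $(|K|,d_{\mathrm{geo}})$ and $(|K|,d_{\mathrm{std}})$. The computational heart is a comparison of the two metrics on a single closed simplex $\sigma$. If $V_v$ denote the vertices of the regular Euclidean simplex of edge length $\kappa$ realizing $\sigma$, then after centering at the barycenter $\|V_v\|^2=r^2$ and $\langle V_v,V_w\rangle=c$ are independent of $v\ne w$ by regularity, and $\kappa^2=\|V_v-V_w\|^2=2(r^2-c)$. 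Writing $a_v=x_v-y_v$ with $\sum_v a_v=0$ for $x,y\in\sigma$, one gets
\[
\|x-y\|^2=(r^2-c)\sum_v a_v^2=\tfrac{\kappa^2}{2}\,d_{\mathrm{std}}(x,y)^2 .
\]
Thus on every simplex, independent of its dimension, the straight-line Euclidean metric equals $\tfrac{\kappa}{\sqrt2}\,d_{\mathrm{std}}$. This dimension-independence is what makes the argument uniform even for non-locally-finite complexes.

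From this I would extract two inequalities. First, since the straight segment between two points of a common closed simplex is a path in $K$, we get $d_{\mathrm{geo}}(x,y)\le \tfrac{\kappa}{\sqrt2}d_{\mathrm{std}}(x,y)$ whenever $x,y$ lie in one simplex. Second, any rectifiable path may be subdivided into arcs each lying in a single simplex; on each such arc the $d_{\mathrm{geo}}$-length equals $\tfrac{\kappa}{\sqrt2}$ times its $d_{\mathrm{std}}$-length, and the $d_{\mathrm{std}}$-length of a path dominates the chordal distance, so $d_{\mathrm{std}}(x,y)\le \tfrac{\sqrt2}{\kappa}d_{\mathrm{geo}}(x,y)$ for all $x,y$. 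The second inequality shows the identity $(|K|,d_{\mathrm{geo}})\to(|K|,d_{\mathrm{std}})$ is Lipschitz, hence the $d_{\mathrm{std}}$-topology is contained in the $d_{\mathrm{geo}}$-topology; this is the routine direction.

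The main obstacle is the reverse direction: a small chordal distance must force a small geodesic distance, even though a priori two points close in the ambient $\ell^2$ sense could sit in simplices that are far apart along the complex. I would rule this out with a carrier argument. Let $\sigma_x$ be the carrier of $x$ and put $m=\min_{v\in\sigma_x}x_v>0$. If $d_{\mathrm{std}}(x,y)<m$, then $y_v>x_v-m\ge 0$ for every vertex $v$ of $\sigma_x$, so the support of $y$ contains $\sigma_x$; hence the carrier $\sigma_y$ of $y$ is a simplex of $K$ having $\sigma_x$ as a face, and both $x$ and $y$ lie in the single closed simplex $\sigma_y$. The first inequality then gives $d_{\mathrm{geo}}(x,y)\le\tfrac{\kappa}{\sqrt2}d_{\mathrm{std}}(x,y)$. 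Consequently, given $x$ and $\varepsilon>0$, the choice $\delta=\min\{m,\tfrac{\sqrt2}{\kappa}\varepsilon\}$ yields $d_{\mathrm{std}}(x,y)<\delta\Rightarrow d_{\mathrm{geo}}(x,y)<\varepsilon$, so the identity $(|K|,d_{\mathrm{std}})\to(|K|,d_{\mathrm{geo}})$ is continuous.

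Combining the two directions shows the metrics are topologically equivalent. Two points I would still check carefully are that $d_{\mathrm{geo}}$ is genuinely a metric (finiteness on each connected component, which follows since any edge-path has finite length, each edge contributing $\kappa$) and that the subdivision-of-paths step in the second inequality is legitimate for arbitrary rectifiable paths; neither is expected to cause real difficulty, as both are local statements and the carrier argument already confines the relevant comparisons to a single simplex.
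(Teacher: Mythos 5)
Your argument is correct, and in fact the paper states this lemma without giving any proof at all, so there is nothing to compare it against; your write-up supplies the missing argument. The two halves are exactly the standard ones: the per-simplex computation showing the chordal Euclidean distance in a regular simplex of edge length $\kappa$ equals $\tfrac{\kappa}{\sqrt2}$ times the $\ell^2$ barycentric distance is right (and, as you note, usefully dimension-independent), the carrier argument correctly reduces the hard direction to a single closed simplex, and the global inequality $d_{\mathrm{std}}\le\tfrac{\sqrt2}{\kappa}d_{\mathrm{geo}}$ gives the easy direction. The one step I would tighten is the ``subdivide an arbitrary rectifiable path into arcs in single simplices'' claim: a continuous path can meet infinitely many simplices and need not admit such a decomposition into finitely or even countably many subarcs. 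The clean fix is to use the chain (string) definition of the glued length metric, $d_{\mathrm{geo}}(x,y)=\inf\sum_i |x_i x_{i+1}|_{\sigma_i}$ over finite chains $x=x_0,\dots,x_n=y$ with consecutive points in a common simplex; then the per-simplex equality and the triangle inequality for $d_{\mathrm{std}}$ give $\sum_i |x_i x_{i+1}|_{\sigma_i}=\tfrac{\kappa}{\sqrt2}\sum_i d_{\mathrm{std}}(x_i,x_{i+1})\ge\tfrac{\kappa}{\sqrt2}d_{\mathrm{std}}(x,y)$ directly, with no appeal to rectifiable paths. With that substitution the proof is complete.
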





\section{Examples}

\subsection{Cantor set}

A Markov diagram that generates a Markov sequence described in the Introduction that generates the Cantor set contains a single point 
\raisebox{1mm}{\tikz \filldraw (0,0) circle (1pt);} as a starting graph, a single production 
\begin{center}
\begin{tikzpicture}
  \filldraw 	(0,0) circle (1pt)
  		(1, 0.5) circle (1pt)
		(1, -0.5) circle (1pt);
  \node (base) at (0,0) { };
  \draw[->, >=latex, shorten >= 1pt, shorten <= 5pt] (1, 0.5) -- (base);
  \draw[->, >=latex, shorten >= 1pt, shorten <= 5pt] (1, -0.5) -- (base);
\end{tikzpicture}
\end{center}
that maps a two-vertex graph onto a single-vertex graph with no gluing rules.
There exists a unique inverse sequence corresponding to this 
diagram.
\begin{center}
\begin{longtable}{m{5mm}m{5mm}m{5mm}m{5mm}m{5mm}m{5mm}m{5mm}m{5mm}m{5mm}m{5mm}m{5mm}m{2cm}}
	\includegraphics[height=3cm]{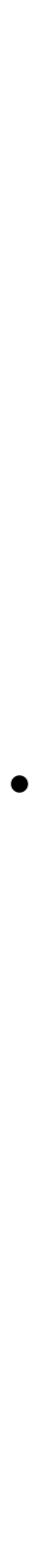} &
	$\leftarrow$ &
	\includegraphics[height=3cm]{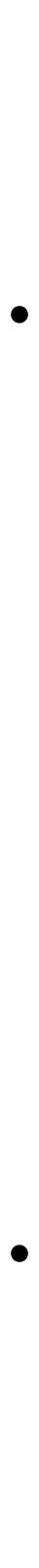} &
	$\leftarrow$ &
	\includegraphics[height=3cm]{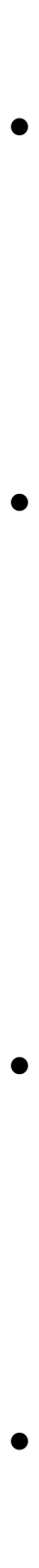} &
	$\leftarrow$ &
	\includegraphics[height=3cm]{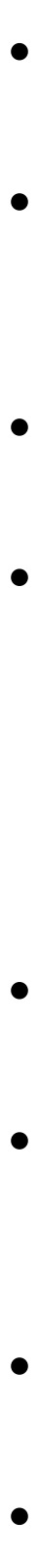} &
	$\leftarrow$ &
	\includegraphics[height=3cm]{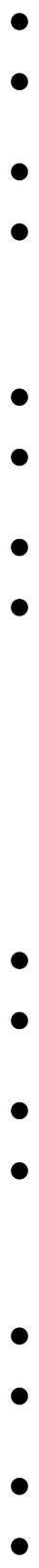} &
	$\leftarrow$ &
	\includegraphics[height=3cm]{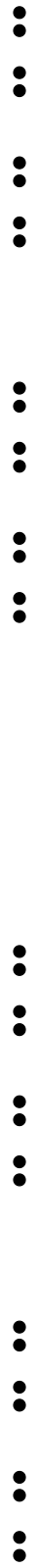} &
	$\leftarrow \ \ \ \cdots$
\end{longtable}
\end{center}

The threads in this inverse limit can be mapped homeomorphically onto the set of binary sequences $\{ 0, 1 \}^\infty$ with the product topology, which is homeomorphic to the Cantor set.


\subsection{The suspension of the Cantor set} If we ``suspend" the diagram for the Cantor set we will get a suspension of the Cantor set in the limit. 
The only non single vertex production is production $P$.

\begin{figure}[H]
\begin{tikzpicture}
  \node at (-0.5, 0) { $P$: };
  \filldraw 	(0,0.5) circle (\ptsize)
  		(0,-0.5) circle (\ptsize)
		(1, 0.5) circle (\ptsize)
		(1, -0.5) circle (\ptsize);

  \draw 	(0,0) circle (\ptsize)
  	   	(0.8, 0) circle (\ptsize)
		(1.2, 0) circle (\ptsize);
		  
  \draw[->, shorten >= 5pt, shorten <= 5pt] (1, 0.5) -- (0, 0.5);
  \draw[->, shorten >= 5pt, shorten <= 5pt] (1, -0.5) -- (0, -0.5);
  \draw[->, shorten >= 5pt, shorten <= 5pt] (0.8, 0) -- (0, 0);
  \draw[shorten >= 3pt, shorten <= 3pt] (1, 0.5) -- (0.8, 0);
  \draw[shorten >= 3pt, shorten <= 3pt] (1, 0.5) -- (1.2, 0);
  \draw[shorten >= 3pt, shorten <= 3pt] (1, -0.5) -- (0.8, 0);
  \draw[shorten >= 3pt, shorten <= 3pt] (1, -0.5) -- (1.2, 0);
  \draw[shorten >= 3pt, shorten <= 3pt] (0, -0.5) -- (0, 0);
  \draw[shorten >= 3pt, shorten <= 3pt] (0, 0.5) -- (0, 0);
\end{tikzpicture}
\end{figure}

Here we use different color to mark end points of the suspension. 
We define a single vertex production 
\begin{center}
\begin{tikzpicture}
\coordinate (Y) at (0,0);
\fill (Y) circle(\ptsize);
\coordinate (E) at (2,0); \coordinate (D) at (3,0);
\fill (E) circle(\ptsize);
\draw [->,shorten >= 10pt, shorten <= 10pt] (E)--(Y);
\end{tikzpicture}
\end{center}
corresponding to the horizontal top and bottom fibers of $P$.
We define gluings that glue these single vertex productions into the corresponding fibers of $P$. We will usually omit these when presenting Markov diagrams.
Note that the Markov diagram does not contain a production for the middle hollow vertex of $P$; it is not needed since gluings are performed only along the top and bottom vertices.

We define the starting space 
\begin{tikzpicture}[rotate=90]
  \filldraw 	(0,0.5) circle (\ptsize)
  		(0,-0.5) circle (\ptsize);
		
  \draw 	(0,0) circle (\ptsize);

  \draw[shorten >= 3pt, shorten <= 3pt] (0, -0.5) -- (0, 0);
  \draw[shorten >= 3pt, shorten <= 3pt] (0, 0.5) -- (0, 0);
		 
\end{tikzpicture}.
The following sequence of spaces is produced.

\begin{figure}[H]
\begin{tikzpicture}
  \filldraw 	(0,0.5) circle (\ptsize)
  		(0,-0.5) circle (\ptsize);
  \draw 	(0,0) circle (\ptsize);
  \draw[shorten >= 3pt, shorten <= 3pt] (0, -0.5) -- (0, 0);
  \draw[shorten >= 3pt, shorten <= 3pt] (0, 0.5) -- (0, 0);

  \draw[->, shorten <= 5pt, shorten >= 5pt] (0.75, 0) -- (0,0);
  
  \filldraw	(1.25, 0.5) circle (\ptsize)
		(1.25, -0.5) circle (\ptsize);
  \draw    	(0.75, 0) circle (\ptsize)
		(1.75, 0) circle (\ptsize);
  \draw[shorten >= 3pt, shorten <= 3pt] (1.25, 0.5) -- (0.75, 0);
  \draw[shorten >= 3pt, shorten <= 3pt] (1.25, 0.5) -- (1.75, 0);
  \draw[shorten >= 3pt, shorten <= 3pt] (1.25, -0.5) -- (0.75, 0);
  \draw[shorten >= 3pt, shorten <= 3pt] (1.25, -0.5) -- (1.75, 0);

  \draw[<-, shorten <= 5pt, shorten >= 5pt] (1.75, 0) -- (2.35,0);

  \filldraw	(3, 0.5) circle (\ptsize)
		(3, -0.5) circle (\ptsize);
  \draw    	(2.35, 0) circle (\ptsize)
  		(2.65, 0) circle (\ptsize)
		(3.35, 0) circle (\ptsize)
		(3.65, 0) circle (\ptsize);
		
  \draw[shorten >= 3pt, shorten <= 3pt] (3, 0.5) -- (2.35, 0);
  \draw[shorten >= 3pt, shorten <= 3pt] (3, 0.5) -- (2.65, 0);
  \draw[shorten >= 3pt, shorten <= 3pt] (3, 0.5) -- (3.35, 0);
  \draw[shorten >= 3pt, shorten <= 3pt] (3, 0.5) -- (3.65, 0);

  \draw[shorten >= 3pt, shorten <= 3pt] (3, -0.5) -- (2.35, 0);
  \draw[shorten >= 3pt, shorten <= 3pt] (3, -0.5) -- (2.65, 0);
  \draw[shorten >= 3pt, shorten <= 3pt] (3, -0.5) -- (3.35, 0);
  \draw[shorten >= 3pt, shorten <= 3pt] (3, -0.5) -- (3.65, 0);

  \draw[<-, shorten <= 5pt, shorten >= 5pt] (3.65, 0) -- (4.35,0);

  \node at (4.5, 0) {$\ldots$};
\end{tikzpicture}
\end{figure}

Observe that the limit space is connected, but not locally connected. 
The horizontal cross sections are homeomorphic to the Cantor set, with the exception of the end points.


\subsection{A diamond sequence}

A diamond sequence is an elementary Markov sequence that starts with a single edge and contains a trivial vertex production 
along with a single edge production.

\begin{center}
\begin{tikzpicture}
  \filldraw 	(0,0.5) circle (1pt)
  		(0,-0.5) circle (1pt)
		(1, 0.5) circle (1pt)
		(1, -0.5) circle (1pt)
  	   	(0.8, 0) circle (1pt)
		(1.2, 0) circle (1pt);
		  
  \draw[->, shorten >= 3pt, shorten <= 5pt] (0.8, 0) -- (0, 0);
  \draw[shorten >= 3pt, shorten <= 3pt] (1, 0.5) -- (0.8, 0);
  \draw[shorten >= 3pt, shorten <= 3pt] (1, 0.5) -- (1.2, 0);
  \draw[shorten >= 3pt, shorten <= 3pt] (1, -0.5) -- (0.8, 0);
  \draw[shorten >= 3pt, shorten <= 3pt] (1, -0.5) -- (1.2, 0);
  \draw[shorten >= 3pt, shorten <= 3pt] (0, -0.5) -- (0, 0.5);
\end{tikzpicture}
\end{center}

There are two gluing rules on fibers of the edge production.

\renewcommand{\w}{17mm}
\begin{figure}[H]
\begin{longtable}{m{5mm}m{2mm}m{\w}m{2mm}m{\w}m{2mm}m{\w}m{2mm}m{\w}m{2mm}m{2cm}}
	\includegraphics[height=\w]{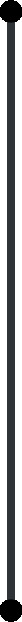} &
	$\leftarrow$ &
	\includegraphics[width=\w]{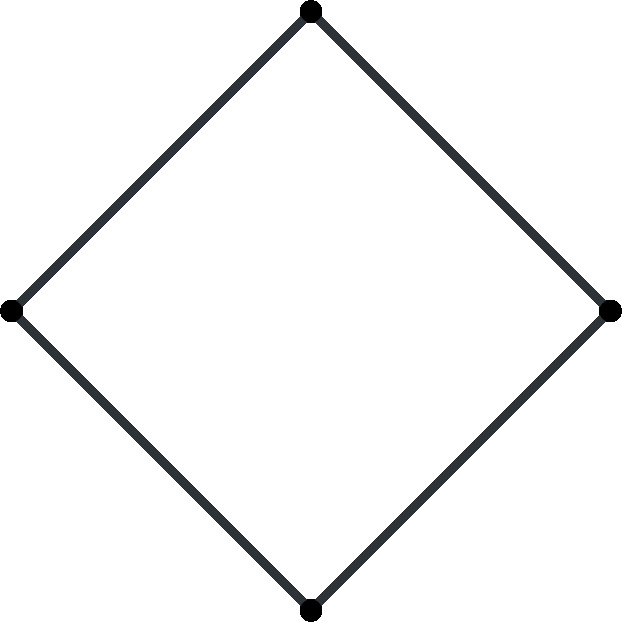} &
	$\leftarrow$ &
	\includegraphics[width=\w]{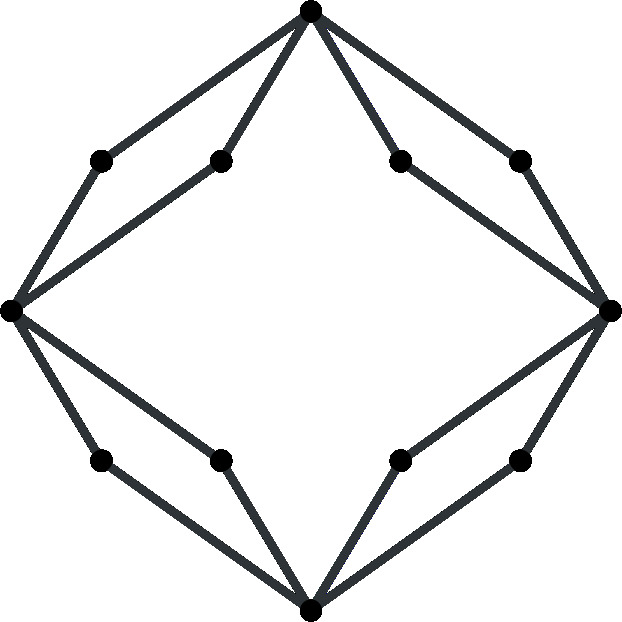} &
	$\leftarrow$ &
	\includegraphics[width=\w]{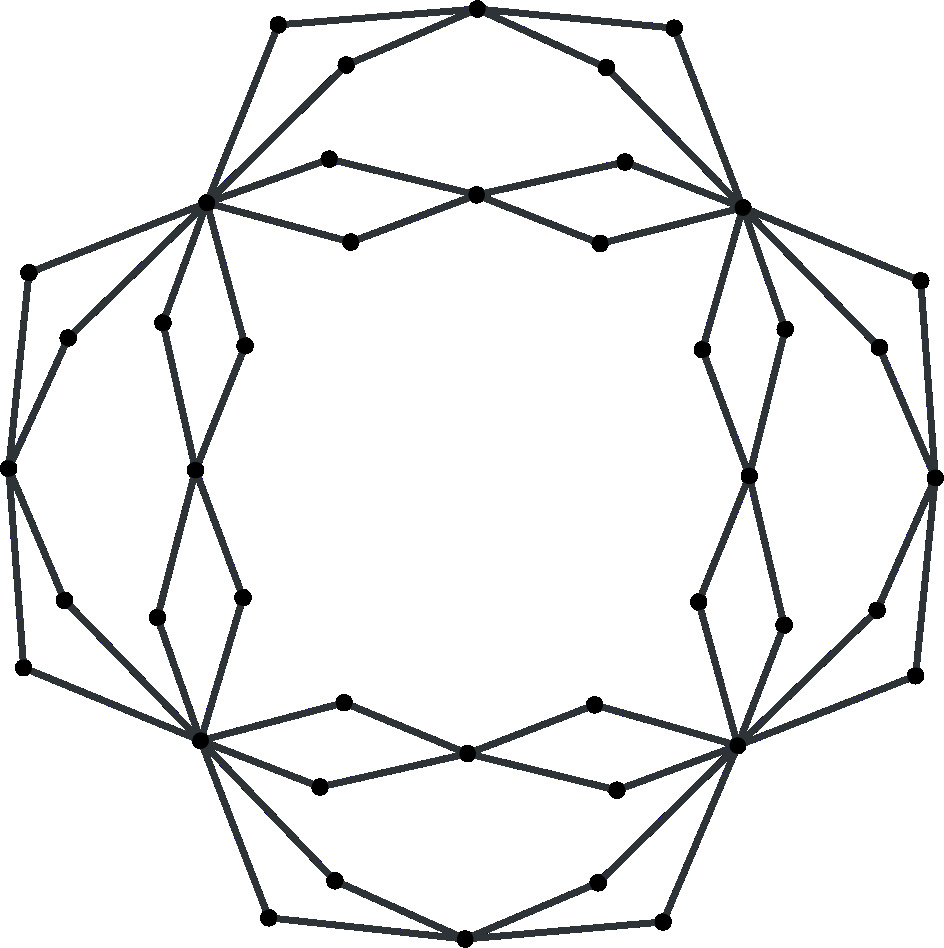} &
    $\leftarrow$ &
	\includegraphics[width=\w]{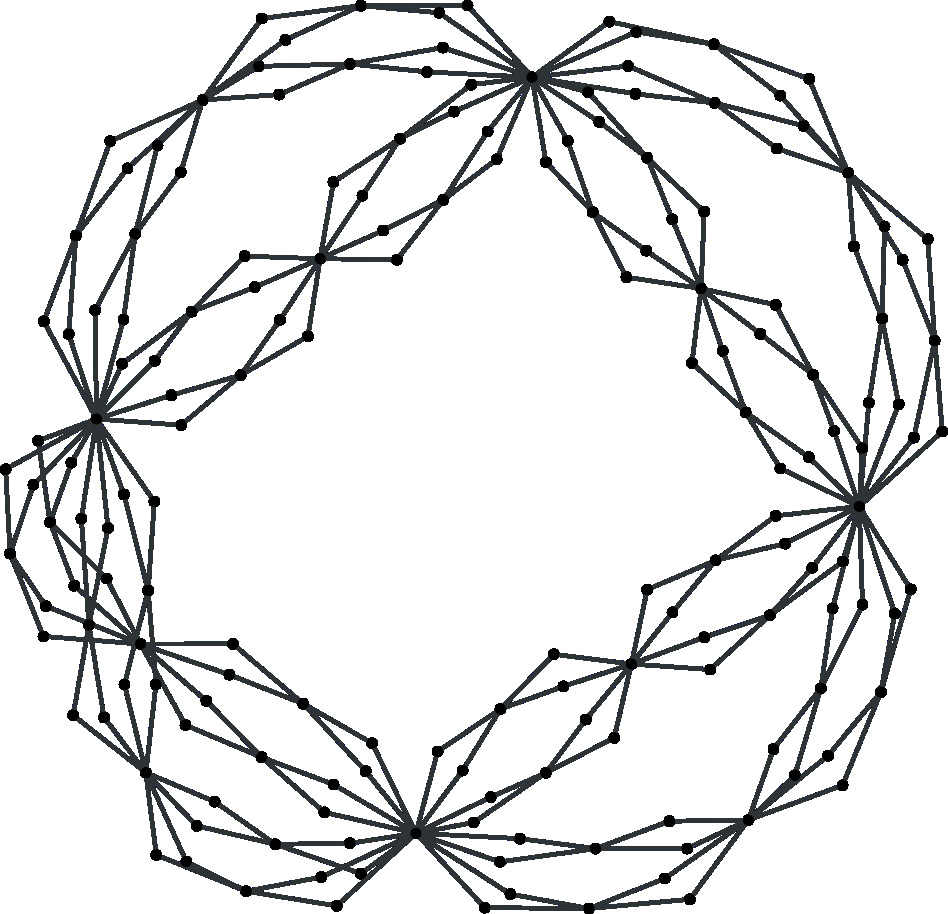} &
    $\leftarrow$ &
    $\cdots$
\end{longtable}
\caption{
  A diamond sequence. The layout of the last two graphs was generated using SFDP spring-block algorithm, hence the irregularities.
}
\end{figure}

The limit of this sequence is a $1$-dimensional connected and locally connected compactum that we call a diamond curve.


\subsection{Join of two Cantor sets}

An elementary Markov sequence that starts with a single edge and contains a double vertex production 
along with a single edge production depicted in the following diagram.

\vspace{1mm}
\begin{center}
\begin{tikzpicture}
  \filldraw 	(0,0.5) circle (1pt)
  		(0,-0.5) circle (1pt)
		(1, 0.5) circle (1pt)
		(1, -0.5) circle (1pt)
  	   	(1.5, 0.5) circle (1pt)
		(1.5, -0.5) circle (1pt);
		  
  \draw[->, shorten >= 3pt, shorten <= 5pt] (0.8, 0) -- (0.2, 0);
  \draw[shorten >= 3pt, shorten <= 3pt] (1, 0.5) -- (1, -0.5);
  \draw[shorten >= 3pt, shorten <= 3pt] (1.5, 0.5) -- (1.5, -0.5);
  \draw[shorten >= 3pt, shorten <= 3pt] (1, 0.5) -- (1.5, -0.5);
  \draw[shorten >= 3pt, shorten <= 3pt] (1.5, 0.5) -- (1, -0.5);
\end{tikzpicture}
\end{center}

The first six graphs from the sequence are shown below. The inverse limit is homeomorphic to a join of two Cantor sets.

\renewcommand{\w}{22mm}
\begin{longtable}{m{\w}m{2mm}m{\w}m{2mm}m{\w}m{2mm}m{\w}m{2mm}m{2cm}}
	\includegraphics[width=\w]{diagrams/menger_0.jpg} &
	$\leftarrow$ &
	\includegraphics[width=\w]{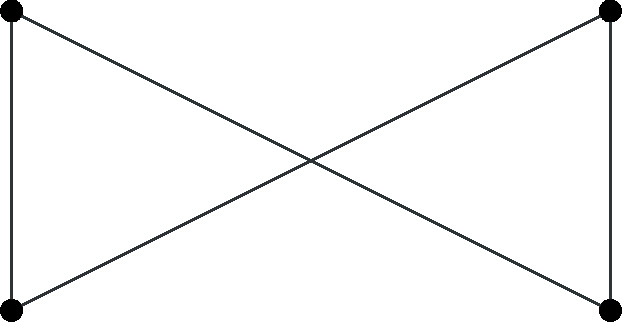} &
	$\leftarrow$ &
	\includegraphics[width=\w]{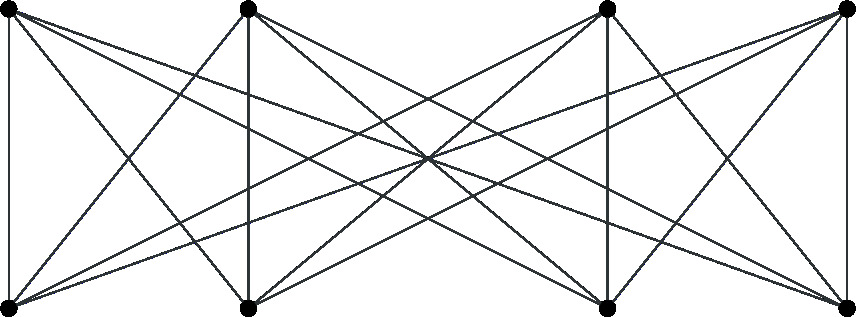} &
	$\leftarrow$ &
    &
    &
    \\
	& & & & & & & &
    \\
    &
	$\leftarrow$ &
	\includegraphics[width=\w]{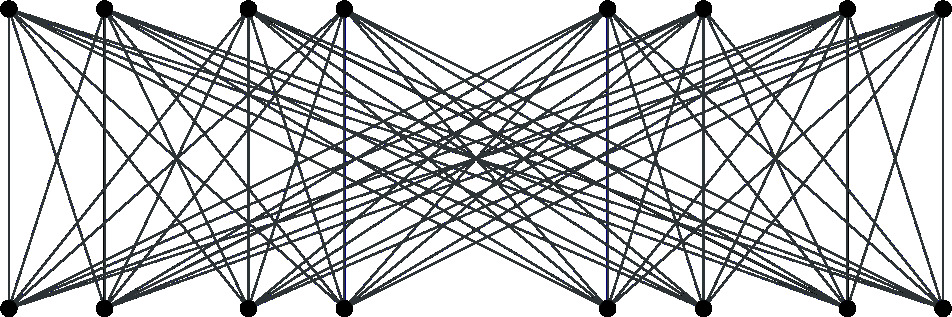} &
	$\leftarrow$ &
	\includegraphics[width=\w]{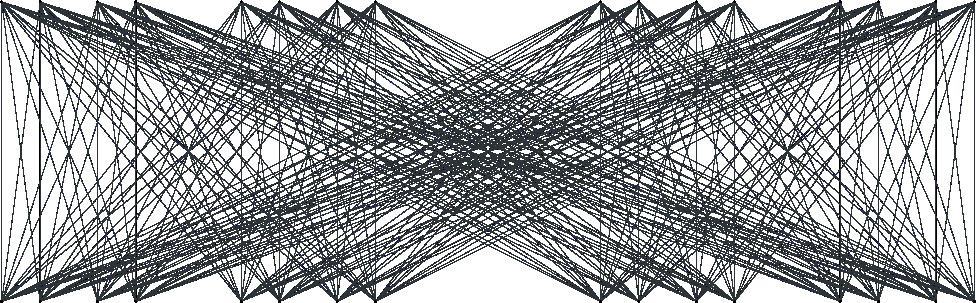} &
	$\leftarrow$ &
	\includegraphics[width=\w]{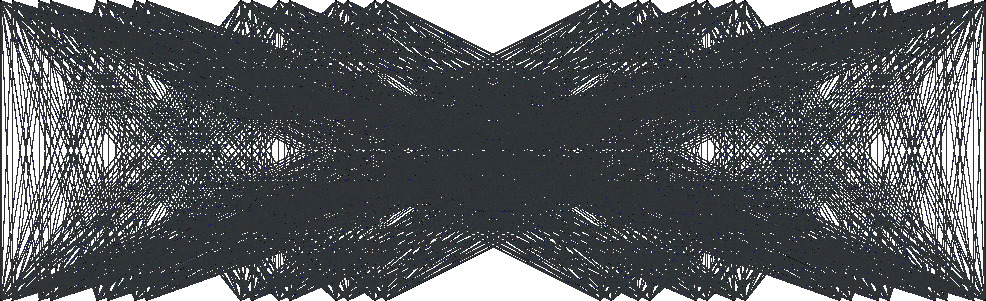} &
	$\leftarrow$
	& $\cdots$
\end{longtable}


\subsection{Solenoid} So far in our examples we only used mono-colored graphs. We shall use colored graphs to describe productions that describe the solenoid. We have two edge productions.

\vspace{3mm}
\begin{center}
\begin{tikzpicture}
\draw [densely dotted] (3,0)--(3,1.5);
\draw [densely dotted] (4,0)--(4.5,1.5);
\draw [->,shorten >= 3pt, shorten <= 3pt] (4,0.75)--(3,0.75);
\draw [->,shorten >= 3pt, shorten <= 3pt] (1,0.75)--(0,0.75);
\draw [line width=3pt,white] (4,1.5)--(4.5,0);
\draw (4,1.5)--(4.5,0);

\draw (0,0)--(0,1.5);
\draw (1,0)--(1,1.5);
\draw (1.5,0)--(1.5,1.5);
\foreach \t in {0,1,1.5, 3, 4, 4.5}{
\filldraw (\t,0) circle (\ptsize)
             (\t,1.5) circle (\ptsize);
}
\end{tikzpicture}
\end{center}

The inverse sequence corresponding to this diagram is

\vspace{1mm}
\begin{center}
\begin{tikzpicture}[scale=0.7]
\coordinate (A) at (0,0);
\coordinate (B) at (2,2);
\coordinate (C) at (2,-2);
\filldraw (A) circle(\ptsize);
\filldraw (B) circle(\ptsize);
\filldraw (C) circle(\ptsize);
\draw (B)--(A)--(C);
\draw [densely dotted] (B)--(C);
\coordinate (D) at (4,0);
\coordinate (E) at (6,2);
\coordinate (F) at (6,-2);
\coordinate (G) at (4.5,0);
\coordinate (H) at (6.5,2);
\coordinate (I) at (6.5,-2);
\draw [densely dotted] (H)--(F);
\draw [white, line width=3pt] (I)--(E);
\draw (E)--(I);
\draw [white, line width=3pt] (H)--(G);
\draw [white, line width=3pt] (I)--(G);

\filldraw (D) circle(\ptsize);
\filldraw (E) circle(\ptsize);
\filldraw (F) circle(\ptsize);
\filldraw (G) circle(\ptsize);
\filldraw (H) circle(\ptsize);
\filldraw (I) circle(\ptsize);

\draw (I)--(G)--(H);
\draw (E)--(D)--(F);

\coordinate (D') at (8,0);
\coordinate (E') at (10,2);
\coordinate (F') at (10,-2);
\coordinate (G') at (8.5,0);
\coordinate (H') at (10.5,2);
\coordinate (I') at (10.5,-2);

\coordinate (D'') at (8.15,0);
\coordinate (E'') at (10.15,2);
\coordinate (F'') at (10.15,-2);
\coordinate (G'') at (8.65,0);
\coordinate (H'') at (10.65,2);
\coordinate (I'') at (10.65,-2);

\draw [densely dotted] (F')--(H''); 
\draw [white, line width=2pt] (F'')--(H');
\draw (F'')--(H');
\draw [white, line width=2pt] (I')--(E');
\draw [white, line width=2pt] (I'')--(E'');
\draw (I')--(E');
\draw (I'')--(E'');
\draw [white, line width=2pt] (I')--(G');
\draw [white, line width=2pt] (I'')--(G'');
\draw (I')--(G');
\draw (I'')--(G'');
\draw [white, line width=2pt] (H')--(G');
\draw [white, line width=2pt] (H'')--(G'');
\draw (H')--(G');
\draw (H'')--(G'');
\draw [white, line width=2pt] (F'')--(D'');
\draw [white, line width=2pt] (D'')--(E'');
\draw (F')--(D')--(E');
\draw (F'')--(D'')--(E'');

\filldraw (D') circle(\ptsize);
\filldraw (E') circle(\ptsize);
\filldraw (F') circle(\ptsize);
\filldraw (G') circle(\ptsize);
\filldraw (H') circle(\ptsize);
\filldraw (I') circle(\ptsize);

\filldraw (D'') circle(\ptsize);
\filldraw (E'') circle(\ptsize);
\filldraw (F'') circle(\ptsize);
\filldraw (G'') circle(\ptsize);
\filldraw (H'') circle(\ptsize);
\filldraw (I'') circle(\ptsize);

\draw [->,shorten >= 10pt, shorten <= 10pt] (D)--(2,0);
\draw [->,shorten >= 10pt, shorten <= 10pt] (D')--(6.25,0);
\draw [->,shorten >= 10pt, shorten <= 10pt] (12.5,0)--(10.5,0);
\node at (12.7,0) {$\cdots$};

\end{tikzpicture}
\end{center}

The limit space is the solenoid~\cite{engelking1992}, directly from the definition.



\section{Topological properties}

\subsection{$k$-Connectedness and local $k$-connectedness} We prove the following sufficient condition for $k$-connectedness and local $k$-connectedness of a Markov space.

\begin{theorem}\label{thm:connectedness}
  Let $X$ be a Markov space that is the inverse limit of a sequence
  \[
     K_1\xleftarrow{p_1}K_2\xleftarrow{p_2}K_3\xleftarrow{p_3}\cdots .
  \]
  Assume that the sequence decomposes over a Markov diagram $\mathcal{D}$.
  If for each production $P$ in $\mathcal{D}$, $P$ is quasi-simplicial, the top complex $t(P)$ is $k$-connected for each $k < n$, and the starting space is $k$-connected for each $k < n$, then $X$ is $k$-connected and locally $k$-connected for each $k < n$.
\end{theorem}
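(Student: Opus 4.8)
The plan is to reduce both conclusions to a single connectivity property of the bonding maps and then propagate it through the inverse limit. Throughout write $\pi_m \colon X \to K_m$ for the limit projections and $p_m^{m'} = p_m \circ \cdots \circ p_{m'-1} \colon K_{m'} \to K_m$ for the composite bonding maps. Since the productions are quasi-simplicial, each $p_m$ is a simplicial map $K_{m+1} \to \beta K_m$, and the open stars $\st(w)$ of vertices $w$ of $\beta K_m$ form a good cover of $K_m$: every nonempty intersection $\st(w_0) \cap \cdots \cap \st(w_j)$ is the open star of a simplex and hence contractible. First I would fix geodesic metrics of scale $\kappa_m \to 0$ on the $K_m$ (Definition~\ref{def:geodesic polyhedron}), chosen so that the $p_m$ are non-expanding; by Lemma~\ref{lem:metrics} these induce the correct topologies, and the sets $\pi_m^{-1}(\st(w))$ then form a neighborhood basis of $X$ whose $\mesh$ tends to $0$.

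The key step is the following local lemma: for every $m \le m'$ and every vertex $w$ of $\beta K_m$, the preimage $(p_m^{m'})^{-1}(\st(w))$ is $k$-connected for each $k < n$. I would prove this by induction on $m' - m$ using the chart that decomposes $p_{m'-1}$ over $\mathcal{D}$. Over the region indexed by a vertex $v$ of the assembly graph the map is, up to the colored embeddings $f_v, g_v$, the production $P(v) \colon t(P(v)) \to b(P(v))$; since $P(v)$ is quasi-simplicial, the relevant preimage deformation retracts through the fibres, so that the cover of $(p_m^{m'})^{-1}(\st(w))$ by the pieces $\im f_v$ meeting it has the same nerve as the corresponding good cover of $\st(w)$ downstairs. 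Every member of this cover, and (because the covers are closed under intersection) every finite intersection of its members, equals some $\im f_{v'} = t(P(v'))$ with $P(v') \in \mathcal{P}$, and is therefore $k$-connected for $k < n$ by hypothesis. A generalized nerve / Mayer--Vietoris argument then identifies the connectivity of the union with that of its (contractible) nerve, giving $k$-connectedness. The same bookkeeping, applied to the cover of all of $K_{m+1}$, shows that each $p_m$ induces isomorphisms on $\pi_i$ for $i < n$; combined with $k$-connectedness of the starting space this yields, by induction, that every $K_m$ is $k$-connected for $k < n$.

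Global $k$-connectedness and $LC^{k}$ for $k<n$ then both follow by filling. For local connectivity, a map $S^i \to \pi_m^{-1}(\st(w))$ with $i < n$ projects into the $k$-connected set $(p_m^{m'})^{-1}(\st(w))$ for all $m' \ge m$; using the local lemma at every level together with the mesh control from the first paragraph, I would assemble a null-homotopy that lives in a prescribed small neighborhood, which gives $LC^{k}$. For global connectivity, since each $p_m$ is an isomorphism on $\pi_i$ for $i<n$ and every $K_m$ is $k$-connected, the tower $\{\pi_i(K_m)\}$ is trivial and Mittag--Leffler, so $\check\pi_i(X) = 0$ for $i<n$; as $X$ has just been shown to be $LC^{k}$, \v{C}ech and genuine homotopy agree in these degrees and $\pi_i(X) = 0$ for $i < n$.

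The main obstacle I expect is the local lemma, specifically making the nerve comparison honest when the top complexes are not simply connected: one must verify that the inclusions of the glued pieces induce the expected maps on $\pi_i$ (a van Kampen / homotopy-colimit computation over the good cover of the contractible star) and that the fibrewise retraction used in each production step is compatible with the gluings. A secondary difficulty is the interdependence of the two conclusions---the \v{C}ech-to-homotopy comparison for global connectivity needs the local connectivity already in hand---so the argument must be organized to establish $LC^{k}$ first and only then deduce $k$-connectedness.
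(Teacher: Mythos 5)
Your strategy (good covers by open stars, a nerve comparison for preimages of stars, then a Mittag--Leffler/\v{C}ech endgame) is genuinely different from the paper's, but the central step --- your ``local lemma'' --- has a real gap. The theorem's hypothesis controls the connectivity of the \emph{whole} top complexes $t(P)$, i.e.\ of the pieces $\im f_v$ of the top chart; it says nothing about the pieces of your cover of $(p_m^{m'})^{-1}(\st(w))$. Intersecting $\im f_v$ with the preimage of the open star of a vertex $w'$ of $\beta K_m$ gives (up to the fibrewise retraction you invoke) the star of the point-fibre $P(v)^{-1}(w')$ inside $t(P(v))$, and the connectivity of such a fibre is \emph{not} a consequence of the connectivity of $t(P(v))$: a production can have a $k$-connected top complex and disconnected point-preimages. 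So the assertion that ``every member of this cover \dots equals some $\im f_{v'} = t(P(v'))$ and is therefore $k$-connected'' is false as written, and the generalized nerve lemma has no input. Two further links in the same chain are unjustified: the claim that the cover upstairs has ``the same nerve'' as the good cover of $\st(w)$ downstairs requires $\im g_u \cap \im g_v \neq \emptyset \Rightarrow \im f_u \cap \im f_v \neq \emptyset$, which is not part of the definition of a decomposition and can fail (only the converse follows from commutativity of the chart diagrams), and the nerve of the downstairs cover of $\st(w)$ is not obviously contractible. Finally the induction on $m'-m$ does not close up as stated: after one step the inner preimage is no longer an open star, so the chart for $p_{m'-1}$ does not hand you a cover of the required shape.

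The paper sidesteps all of this by working with the decomposition covers $\mathcal{D}_i = \{\im g_v\}_v$ themselves rather than with stars of barycentric subdivisions. It checks that the sequence with these covers satisfies conditions (A)--(E) of \cite[Definition~4.2]{localk} --- the decisive one being (D), that the pullbacks $p_i^{-1}(\mathcal{D}_i)$ are $AE(n)$-covers, which is exactly where the $k$-connectedness of the top complexes enters via \cite[Theorem~3.1]{localk} --- and then invokes the lifting lemma \cite[Lemma~4.3]{localk} to lift a null-homotopy from $K_1$ (resp.\ from a tail $K_i \xleftarrow{p_i} K_{i+1} \xleftarrow{} \cdots$ for the local statement) up to $X$. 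This makes your \v{C}ech-to-homotopy comparison and the Mittag--Leffler bookkeeping unnecessary. To salvage your route you would have to either establish the connectivity of the star-preimages from the stated hypotheses (which I do not believe follows) or replace the star cover by the charts' own covers and find a substitute for the nerve comparison.
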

\begin{proof}
  Fix $k < n$ and let $\varphi \colon S^k \to X$.   
  Let $\mathcal{D}_i$ be a decomposition of the space $K_i$ in the sequence. We will show that the inverse sequence $K_1 \xleftarrow{p_1} K_2 \xleftarrow{p_2} \cdots$ along with the sequence $\mathcal{D}_i$ of covers satisfies conditions of \cite[Lemma~4.3]{localk}. Then if $\varPhi \colon B^{k+1} \to K_1$ is a null-homotopy of $\pi_1 \circ \varphi$ (which exists because the starting space $K_1$ is $k$-connected), by \cite[Lemma~4.3]{localk} $\varPhi$ can be lifted to a map into $X$ that extends $\varphi$, which implies that $X$ is $k$-connected.
  
  To verify that conditions of \cite[Lemma~4.3]{localk} are satisfied we need to verify that the sequence $\mathcal{D}_i$ of covers satisfies conditions of \cite[Definition~4.2]{localk}.
  
  By Lemma~\ref{lem:metrics}, each $K_i$ is complete metric space (condition (A)) and each bonding map is $1$-Lipschitz (condition (B)). By the definition, each decomposition is closed, locally finite cover (condition (C)). By the assumption on top graphs, the pull-back $p_i^{-1}(\mathcal{D}_i)$ is $k$-connected and locally $k$-connected for each $k < n$. By \cite[Theorem 3.1]{localk}, it is an $AE(n)$-cover (condition (D)). Again, by Lemma~\ref{lem:metrics}, $\sum_{i=1}^\infty \mesh \mathcal{D}_i < \infty$ (condition (E)). We are done.
  
  To prove that $X$ is locally connected consider $x \in X$ and an open neighborhood $U$ of $x$. Let $i$ such that $B(\st_{\mathcal{D}_i} \pi_i(x), \sum_{k \geq i} \mesh \mathcal{D}_i) \subset U$. Let $V = \pi_i^{-1}(\st \pi_i(x))$. Observe that $\pi_i(V)$ is $k$-connected. If we repeat the above argument for a sequence $K_i \xleftarrow{p_i} K_{i+1} \xleftarrow{p_{i+1}}\cdots$, we get that any map $\varphi \colon S^k \to V$ is null-homotopic in $U$, hence $X$ is locally $k$-connected.
\end{proof}

\begin{remark}
  The combinatorial property that implies connectedness conditions stated in Theorem~\ref{thm:connectedness} is ``color-blind", i.e. it does not use colorings of the productions in any way.
\end{remark}

\begin{remark}
  The condition that production maps are quasi-simplicial in statement of Theorem~\ref{thm:connectedness} is necessary. In the suspension of the Cantor set example, the top graphs of the productions are connected, yet the limit is not locally connected. In the diamond sequence example the maps are quasi-simplicial and the diamond space is indeed locally connected.
\end{remark}

\subsection{Disjoint arcs property} We prove a sufficient condition for the disjoint arcs property of a compact Markov curve.

\begin{definition}
  A compact space $X$ has disjoint the arcs property if for each map $f \colon [0, 1] \times \{ 0, 1 \} \to X$ and
  each $\varepsilon > 0$ there exists a map $f' \colon [0, 1] \times \{ 0, 1 \} \to X$ that is $\epsilon$-close to $f$ and such that $f'([0,1] \times \{ 0 \}) \cap f'([0,1] \times \{ 1 \}) = \emptyset$.
\end{definition}
\begin{theorem}\label{thm:disjoint arcs}
  Let $X$ be a Markov space that is the inverse limit of a sequence
  \[
     K_1\xleftarrow{p_1}K_2\xleftarrow{p_2}K_3\xleftarrow{p_3}\cdots .
  \]
  Assume that for each $i$, $K_i$ is a finite graph.
  Assume that the sequence decomposes over an elementary Markov diagram $\mathcal{D}$ such that the vertex productions are isomorphic to
\begin{tikzpicture}
\coordinate (Y) at (0,0);
\fill (Y) circle(\ptsize);
\coordinate (E) at (1,0); \coordinate (D) at (2,0);
\fill (D) circle(\ptsize);
\fill (E) circle(\ptsize);
\draw [black] (D)--(E);
\draw [->,shorten >= 5pt, shorten <= 5pt] (E)--(Y);
\end{tikzpicture}
  and the edge productions have a connected and biconnected top graph. Then $X$ has the disjoint arcs property.
\end{theorem}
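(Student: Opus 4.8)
The plan is to approximate the two arcs by their images in a finite stage and then to separate them one or two stages further, exploiting the biconnectedness of the edge fibers, which provides two disjoint ``lanes'' over every edge. Write $f_0=f|_{[0,1]\times\{0\}}$ and $f_1=f|_{[0,1]\times\{1\}}$, and let $\pi_m\colon X\to K_m$ be the projections. As in the proof of Theorem~\ref{thm:connectedness}, the bonding maps are $1$-Lipschitz and, by Lemma~\ref{lem:metrics}, $\sum_{i\ge n}\mesh\mathcal D_i\to 0$; hence for large $n$ the fibers of $\pi_n$ have diameter less than $\varepsilon$. Two consequences drive the reduction: if $\pi_M(A)\cap\pi_M(B)=\emptyset$ for some $M$ then $A\cap B=\emptyset$; and if a level-$n$ projection is moved by less than $\varepsilon$ and then lifted, the resulting arc in $X$ moves by at most $2\varepsilon$. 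So it suffices to produce, for some finite $M\ge n$, arcs $f_0',f_1'$ whose projections to $K_M$ are \emph{disjoint} while their projections to $K_n$ stay $\varepsilon$-close to $\pi_n f_0,\pi_n f_1$. First I would replace $\pi_n f_0,\pi_n f_1$ by nearby edge-paths $\gamma^0,\gamma^1$ in the finite graph $K_n$ (each within one edge of the original), reducing the problem to lifting two prescribed edge-paths in $K_n$ to disjoint paths at a finer stage.

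The separation mechanism is as follows. By hypothesis the top graph of every edge production is connected and biconnected, so the $p_n$-preimage of every edge $e=\overline{ab}$ of $K_n$ is a biconnected graph $G_e\subset K_{n+1}$; by Menger's theorem it contains two internally disjoint arcs joining the side $p_n^{-1}(a)$ to the side $p_n^{-1}(b)$. These are the two lanes over $e$. The vertex productions are single edges, so the $p_n$-preimage of a vertex $v$ is a single arc $F_v\subset K_{n+1}$, which admits no two disjoint lanes; this is repaired by descending one further stage. Since $F_v$ is an edge of $K_{n+1}$, its $p_{n+1}$-preimage $G_{F_v}\subset K_{n+2}$ is again the top graph of an edge production, hence biconnected, and so also carries two disjoint lanes. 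Thus in $K_{n+2}$ \emph{every} cell of $K_n$---edge and vertex alike---has a biconnected preimage supporting two disjoint lanes (over an edge one simply lifts the two lanes already found in $G_e$, their $p_{n+1}$-preimages being disjoint because they lie over disjoint sets). I would therefore carry out the whole separation at level $M=n+2$.

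It remains to assemble these local lanes into two \emph{globally} disjoint lifts $\tilde\gamma^0,\tilde\gamma^1\subset K_{n+2}$: along each edge traversed by $\gamma^i$ route $\tilde\gamma^i$ in lane $i$, and at each vertex where $\gamma^i$ turns reconnect its incoming and outgoing lanes by a connector inside the biconnected graph $G_{F_v}$, chosen disjoint from the connector used by the other strand. Because self-intersections of a single arc are permitted, only two strands ever need be kept apart, so at each branch vertex it suffices to find two disjoint connectors in $G_{F_v}$ joining the prescribed pairs of boundary vertices. I would restrict attention to the finite subgraph $H=\gamma^0([0,1])\cup\gamma^1([0,1])\subset K_n$, fix a spanning tree of $H$ to pin down a consistent choice of ``lane $0$'' versus ``lane $1$'' along the tree, and then propagate the labelling across $H$.

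The hard part is exactly this global consistency at the branch vertices of $H$. Producing two disjoint connectors in a biconnected graph for a \emph{prescribed} pairing of endpoints is the two-disjoint-paths problem, which biconnectedness alone does not settle; and propagating the lane labelling around a cycle of $H$ carries a potential $\mathbb{Z}/2$ monodromy that tries to interchange the two lanes. Both difficulties should be absorbed by the freedom the construction leaves open: at each branch vertex one is free to choose how the incoming and outgoing lanes enter $G_{F_v}$ and to relabel the two lanes on each edge independently, and a biconnected graph always admits two disjoint connectors for a suitably chosen non-crossing pairing, which this freedom realises. The residual monodromy around the finitely many independent cycles of $H$ can be undone by rerouting one strand a single stage further, in $K_{n+3}$, where the offending edge again acquires a biconnected fiber inside which the lane interchange is reversed. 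Once $\tilde\gamma^0$ and $\tilde\gamma^1$ are disjoint in $K_{n+2}$ (or, after this last adjustment, in $K_{n+3}$), any compatible lifts to $X$ yield arcs $f_0',f_1'$ with disjoint images that are $2\varepsilon$-close to $f_0,f_1$, which establishes the disjoint arcs property.
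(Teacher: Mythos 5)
Your overall strategy --- push the two arcs down to a fine stage, use the biconnectedness of the edge-production top graphs to run the two strands along disjoint ``lanes'' at a deeper stage, and then lift back to $X$ --- is the same mechanism the paper uses, but the paper packages it much more efficiently. It constructs, for every $i$, two \emph{globally disjoint sections} $f_i,g_i\colon K_i\to K_{i+1}$ of the bonding map (so $p_i\circ f_i=p_i\circ g_i=\id_{K_i}$ and $\im f_i\cap\im g_i=\emptyset$), obtained by marking the two top vertices of each vertex production with $A$ and $B$, sending each vertex of $K_i$ to its $A$-point (resp.\ $B$-point) and each edge to one of two disjoint $A$--$A$ and $B$--$B$ paths chosen in the biconnected edge fiber. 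Then $f_i\circ\pi_i\circ f$ and $g_i\circ\pi_i\circ f$ are disjoint for free, and one controlled lift (the same lemma invoked for Theorem~\ref{thm:connectedness}) finishes the proof. Note in particular that a vertex fiber, being a single edge, already contains two distinct points $A$ and $B$; you do not need to descend to $K_{n+2}$ (let alone $K_{n+3}$) to keep the strands apart over vertices, and working with sections of $p_i$ rather than with the two particular edge-paths removes all of your bookkeeping about branch vertices and turns in the subgraph $H$.

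The genuine gap in your write-up is exactly the point you flag as ``the hard part'': you never actually solve the prescribed-endpoint two-disjoint-paths problem or kill the $\mathbb{Z}/2$ lane monodromy --- you only assert that the relabelling freedom and one extra rerouting stage ``should'' absorb them. Biconnectedness (via Menger) yields two disjoint paths joining the two fiber edges, but not with a pairing of your choosing, and since the same vertex production is glued into many edge productions, flipping labels to straighten one edge can cross another; this is a global consistency problem that a complete proof must address. The paper handles it by fixing the $A$/$B$ marking once on the finitely many vertex productions of the diagram and selecting the disjoint paths in each edge production compatibly with that marking, rather than edge by edge along the images of the arcs. As written, your argument reduces the theorem to an unproved combinatorial claim at its crux, so it is not complete.
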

\begin{proof}
  As a first step we shall prove that for each $i$ there exist a pair of embeddings $f_i, g_i \colon K_i \to K_{i+1}$ such that $\im f_i \cap \im g_i = \emptyset$ and $p_i \circ f_i = p_i \circ g_i = \id_{K_i}$.

  For each vertex production, mark the two vertices of the top single-edge graph with the letters $A$ and $B$. Since the top graphs of the edge productions are biconnected, for each gluing of the vertex productions onto the end points of the edge production we may select disjoint paths in the top graph connecting vertices marked with $A$ and vertices marked with $B$. We construct $f_i$ to map the vertices of $K_i$ onto the corresponding vertices of $K_{i+1}$ marked with $A$ and such that it maps edges onto the selected paths. We perform a similar construction for $g_i$.
  
  Take $\varepsilon >0$. Fix $i$ such that the scale of $K_i$ is smaller than $\frac 13 \varepsilon$. Let $f \colon [0, 1] \times \{ 0, 1 \} \to X$. Observe that $f_i \circ \pi_i \circ f$ and $g_i \circ \pi_i \circ f$ are disjoint. By \cite[Lemma 4.3]{localk} we may lift these maps to $X$ in such a way that these lifts will be $\varepsilon$-close. Since their projections onto $K_i$ are disjoint, the maps are disjoint. We are done.
\end{proof}

\subsection{Application to the $1$--$8$ sequence}
We are now ready to prove that the inverse limit of the $1$--$8$ sequence defined in Subsection~\ref{ss:18} is homeomorphic to the Menger curve $M^3_1$.

\begin{theorem}
  The inverse limit of an elementary Markov sequence that decomposes over
production
\begin{center}
\begin{tikzpicture}[scale=0.75]
\coordinate (X) at (2,0); \coordinate (Y) at (0,0);
\draw [black] (X)--(Y); \fill (X) circle(\ptsize); \fill (Y) circle(\ptsize);
\coordinate (A) at (2,2); \coordinate (B) at (2,1);
\coordinate (F) at (1,2); \coordinate (C) at (1,1);
\coordinate (E) at (0,2); \coordinate (D) at (0,1);
\fill (A) circle(\ptsize); \fill (B) circle(\ptsize);
\fill (C) circle(\ptsize); \fill (D) circle(\ptsize);
\fill (E) circle(\ptsize); \fill (F) circle(\ptsize);
\draw [black] (A)--(B)--(C)--(D)--(E)--(F)--(A);
\draw [black] (F)--(C);
\draw [->,shorten >= 5pt, shorten <= 5pt] (C)--(1,0);
\end{tikzpicture}
\end{center}
with a connected finite starting graph is homeomorphic to the Menger curve $M^3_1$.
\end{theorem}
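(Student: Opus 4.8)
The plan is to apply the topological characterization of the Menger curve due to Anderson: a metric continuum is homeomorphic to $M^3_1$ if and only if it is a one-dimensional Peano continuum that has no local cut points and satisfies the disjoint arcs property. Writing $X$ for the inverse limit of $K_1 \xleftarrow{p_1} K_2 \xleftarrow{p_2} \cdots$ with projections $\pi_i \colon X \to K_i$, I would verify these four requirements in turn, leaving the absence of local cut points for last as the only substantial point; the other three fall out of the two theorems already proved in this section.

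The routine properties come first. Each $K_i$ is a finite graph, hence a compact one-dimensional polyhedron, so $X$ is compact and $\dim X \le 1$; since the edge production $P_8$ strictly enlarges every edge, $X$ is a nondegenerate continuum and therefore $\dim X = 1$. For connectedness and local connectedness I would invoke Theorem~\ref{thm:connectedness} with $n = 1$: the productions $P_1$ and $P_8$ are quasi-simplicial, their top graphs (a single edge and the figure-eight) are connected, i.e. $k$-connected for every $k < 1$, and the starting graph is connected by hypothesis. Theorem~\ref{thm:connectedness} then yields that $X$ is connected and locally connected, so $X$ is a one-dimensional Peano continuum.

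For the disjoint arcs property, observe that the top graph $t(P_8)$ is the theta graph on the two degree-three vertices $C$ and $F$ joined by three internally disjoint arcs; deleting any single vertex leaves it connected, so $t(P_8)$ is connected and biconnected. The vertex production is precisely the single-edge-onto-vertex production demanded by Theorem~\ref{thm:disjoint arcs}, and the diagram is elementary with finite graphs $K_i$, so that theorem applies and $X$ has the disjoint arcs property.

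The hard part will be showing that $X$ has no local cut points. Fix $x = (x_i) \in X$ and a connected open $U \ni x$; choose $j$ and a connected subgraph $C_j \subseteq K_j$ with $x_j$ in its interior and $\pi_j^{-1}(C_j) \subseteq U$, and set $C_k = (p_{k-1} \circ \cdots \circ p_j)^{-1}(C_j)$ for $k \ge j$. I must show that $\pi_j^{-1}(C_j) \setminus \{x\}$ is connected. The mechanism is that although $x_k$ may separate $C_k$, it cannot persist in doing so as $k$ grows: a neighborhood of $x_k$ pulls back under $p_k$ into a copy of the theta graph $t(P_8)$, and biconnectedness supplies, for the two sides meeting at $x_k$, a connecting path in $C_{k+1}$ that misses $x_{k+1}$. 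Iterating this detour, any two points $y, z \in \pi_j^{-1}(C_j) \setminus \{x\}$ have projections $y_k, z_k$ that, for $k$ large enough, are joined in $C_k$ by a path avoiding $x_k$; lifting this path into $X$ by \cite[Lemma~4.3]{localk}, while keeping it inside $\pi_j^{-1}(C_j)$, produces an arc from $y$ to $z$ that misses $x$. Hence no point locally separates $X$, and Anderson's characterization gives $X \cong M^3_1$. The delicate issues I expect to fight with are making the detour around $x_k$ uniform across scales so that it survives in the limit, and arranging the lift to remain inside the prescribed neighborhood while genuinely avoiding the single thread $x$; the biconnectedness of $t(P_8)$ is exactly the combinatorial input that guarantees such detours are available at every level of the sequence.
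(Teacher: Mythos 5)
Your verification of the routine properties coincides with the paper's argument: compactness and $\dim X \le 1$ from the $K_i$ being finite graphs, nondegeneracy giving $\dim X = 1$, connectedness and local connectedness from Theorem~\ref{thm:connectedness} with $n=1$, and the disjoint arcs property from Theorem~\ref{thm:disjoint arcs} after correctly observing that $t(P_8)$ is a connected, biconnected theta graph on the two degree-three vertices. Where you diverge is the target characterization. The paper finishes by citing Bestvina's characterization~\cite{bestvina1988}: a compact, connected, locally connected, one-dimensional metric space with the disjoint arcs property is homeomorphic to $M^3_1$. That is exactly the list you have already fully established, so the absence of local cut points never needs to be addressed. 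The characterization you attribute to Anderson is also not quite his: Anderson's hypotheses are absence of local cut points together with nowhere-planarity, not the disjoint arcs property, and what you have written is Bestvina's list with a redundant condition appended. This matters because the portion of your proof devoted to that redundant condition --- the part you call the hard part --- is not a complete argument: the detour around $x_k$ is described only in outline, and you yourself flag the uniformity of the detours across scales and the control of the lift inside $\pi_j^{-1}(C_j)$ as unresolved. If no local cut points were genuinely required, this would be a gap; since it is not required, the fix is simply to delete that section and invoke~\cite{bestvina1988} directly on the properties you have already proved.
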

\begin{proof}
  Denote the limit space by $X$.
  Observe that the production maps of the Markov diagram are quasi-simplicial, have connected top graphs, and the starting graph is connected. By Theorem~\ref{thm:connectedness}, $X$ is connected and locally connected.
  Since each space in the sequence is a finite graph, the inverse limit is compact and at most $1$-dimensional. Since $X$ contains at least two points and is path-connected, it is at least $1$-dimensional.
  By Theorem~\ref{thm:disjoint arcs}, $X$ has the disjoint arcs property.

  We have shown that $X$ is compact, $1$-dimensional, connected, locally connected, and has disjoint arcs property.
  By Bestvina's characterization theorem~\cite{bestvina1988}, $X$ is homeomorphic to $M^3_1$.
\end{proof}








\bibliography{references}

\end{document}